\def\@seccntformat#1{\@ifundefined{#1@cntformat}%
   {\csname the#1\endcsname\quad}  % default
   {\csname #1@cntformat\endcsname}% enable individual control
}
\let\oldappendix\appendix %% save current definition of \appendix
\renewcommand\appendix{%
    \oldappendix
    \newcommand{\section@cntformat}{\appendixname~\thesection:\quad}
}
\newtheorem{lemma}{Lemma}[section]
\newtheorem{theorem}{Theorem}[section]
\newtheorem{corollary}{Corollary}[section]
\begin{document}
%\copyrightstatement

\title{Some results on the $\mathbf{\xi(s)}$ and $\mathbf{\Xi(t)}$ functions associated with Riemann's $\mathbf{\zeta(s)}$ function.
\footnote{This paper was originally posted on \url{http://hp.hisashikobayashi.com/\#5}  January 22, 2016, under the title ``No. 5. Some results on the $\mathbf{\xi(s)}$ and $\mathbf{\Xi(t)}$ functions associated with Riemann's $\mathbf{\zeta(s)}$ function: Towards a Proof of the Riemann Hypothesis.''}}
\date{2016/03/04}
\author{Hisashi Kobayashi\footnote{Professor Emeritus, Department of Electrical Engineering, Princeton University, Princeton, NJ 08544.}}

\maketitle

\begin{abstract}
We report on some properties of the $\xi(s)$ function and its value on the critical line, $\Xi(t)=\xi\left(\tfrac{1}{2}+it\right)$. First, we present some identities that hold for the log derivatives of a holomorphic function. We then re-examine Hadamard's product-form representation of the $\xi(s)$ function, and present a simple proof of the horizontal monotonicity of the modulus of $\xi(s)$.  We then show that the $\Xi(t)$ function can be interpreted as the autocorrelation function of a weakly stationary random process, whose power spectral function $S(\omega)$ and $\Xi(t)$ form a Fourier transform pair.  We then show that $\xi(s)$ can be formally written as the Fourier transform of $S(\omega)$ into the complex domain $\tau=t-i\lambda$, where $s=\sigma+it=\tfrac{1}{2}+\lambda+it$. We then show that the function $S_1(\omega)$ studied by P\'{o}lya has $g(s)$ as its Fourier transform, where $\xi(s)=g(s)\zeta(s)$. Finally we discuss the properties of the function $g(s)$, including its relationships to Riemann-Siegel's $\vartheta(t)$ function, Hardy's Z-function, Gram's law and the Riemann-Siegel asymptotic formula.\\

\noindent\emph{Key words}: Riemann's $\zeta(s)$ function, $\xi(s)$ and $\Xi(t)$ functions, Riemann hypothesis, Monotonicity of the modulus $\xi(t)$,  Hadamard's product formula, P\'{o}lya's Fourier transform representation, Fourier transform to the complex domain, Riemann-Siegel's asymptotic formula, Hardy's Z-function.
\end{abstract}
\section{Definition of $\mathbf{\xi(s)}$ and its properties}

The Riemann zeta function is defined by
\begin{align}
\zeta(s)=\sum_{n=1}^\infty n^{-s}, ~~\mbox{for}~~\Re(s)>1,
\end{align}
which is then defined for the entire $s$-domain by analytic continuation (See Riemann \cite{riemann} and Edwards \cite{edwards}). In this article we investigate some properties of the function $\xi(s)$
\footnote{In Riemann's 1859 seminal paper \cite{riemann} he was primarily concerned with the properties of this function evaluated on the critical line $s=\tfrac{1}{2}+it$, which he denoted as $\xi(t)$.  We write it as $\Xi(t)$ instead, as defined in (\ref{def-Xi}). See, e.g., Titchmarsh \cite{titchmarsh} p. 16.  Edwards \cite{edwards} writes explicitly $\xi(\tfrac{1}{2}+it)$ for $\Xi(t)$.}
defined by (see Appendix \ref{append-A})
\begin{align}\label{xi-def}
\xi(s)=\frac{s(s-1)}{2}\pi^{-s/2}\Gamma(s/2)\zeta(s).
\end{align}
The function $\xi(s)$ is an \emph{entire} function with the following ``reflective'' property:
\begin{align}\label{symmetric_property}
\xi(1-s)=\xi(s).
\end{align}
If we write
\[s=\sigma+it=\tfrac{1}{2}+\lambda+it,\]
the property (\ref{symmetric_property}) is paraphrased as
\begin{align}
      \Re\left\{\xi\left(\tfrac{1}{2}+\lambda+it\right)\right\}&=\Re\left\{\xi\left(\tfrac{1}{2}-\lambda+it\right)\right\}, \label{A-symmetric}\\
      \Im\left\{\xi\left(\tfrac{1}{2}+\lambda+it\right)\right\}&=-\Im\left\{\xi\left(\tfrac{1}{2}-\lambda+it\right)\right\}, \label{B-skew}
\end{align}
By setting $\lambda=0$ in (\ref{B-skew}), we find
\begin{align}\label{B=0}
\Im\left\{\xi\left(\tfrac{1}{2}+it\right)\right\}=0,  ~~\mbox{for all}~~t,
\end{align}
which implies that $\xi(s)$ is real on the ``critical line.''  Thus, if we define a real-valued function
\begin{align}\label{def-Xi}
\Xi(t)=\xi\left(\tfrac{1}{2}+it\right)=\Re\left\{\xi\left(\tfrac{1}{2}+it\right)\right\},
\end{align}
the Riemann hypothesis can be paraphrased as ``The zeros of $\Xi(t)$ are all real,'' which is indeed the way Riemann stated his conjecture, now known as the \emph{Riemann hypothesis} or RH for short.

By applying Laplace's equation to $\Im\left\{\xi(s)\right\}$ and using (\ref{B=0}), we readily find
\begin{align}
\left.\frac{\partial^2 \Im\left\{\xi(s)\right\}}{\partial\lambda^2}\right|_{\lambda=0}=0.
\end{align}
Thus, it follows that $\Im\left\{\xi(s)\right\}$ must be a polynomial in $\lambda$ of degree 1 in the
vicinity of $\lambda=0$, viz.,
\begin{align}\label{B-straight-line}
 \Im\left\{\xi(s)\right\}\approx b(t)\lambda,~~\mbox{for}~\lambda\approx 0,
\end{align}
where $b(t)$ is a function of $t$ only, independent of $\lambda$.

Similarly, by applying Laplace's equation to $\Re\left\{\xi(s)\right\}$ and using the Cauchy-Riemann equation:
\begin{align}
\frac{\partial\Re\left\{\xi(s)\right\}}{\partial t}=-\frac{\partial\Im\left\{\xi(s)\right\}}{\partial\lambda}.
\end{align}
and using (\ref{B-straight-line}), we find that the real part of $\xi(s)$ is a polynomial in $\lambda$ of degree 2:
\begin{align}\label{A-parabola}
\Re\left\{\xi(s)\right\}\approx\frac{b'(t)}{2}\lambda^2,~~\mbox{for}~\lambda\approx 0,
\end{align}
where $\displaystyle{b'(t)=\frac{db(t)}{dt}}$.

\section{Preliminaries} \label{sec-preliminaries}

\subsection{Logarithmic Differentials of Holomorphic Functions}

We begin with the following lemma that is applicable to any holomorphic function.
\begin{lemma}\label{lemma-1}
For a holomorphic function $f(s)$ we have
\begin{align}
\frac{1}{|f(s)|}\cdot \frac{\partial |f(s)|}{\partial \sigma} &=\Re\left\{\frac{f'(s)}{f(s)}\right\},\label{Re-f'_f}\\
\frac{1}{|f(s)|}\cdot \frac{\partial |f(s)|}{\partial t}&=-\Im\left\{\frac{f'(s)}{f(s)}\right\},\label{Im-f'_f}
\end{align}
wherever $f(s)\neq 0$, where $\displaystyle{f'(s)=\frac{d f(s)}{ds}}$.
\end{lemma}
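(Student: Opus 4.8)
The plan is to write $f(s)$ in polar form and differentiate. Writing $f(s) = |f(s)|\,e^{i\arg f(s)}$, or equivalently $\log f(s) = \log|f(s)| + i\arg f(s)$ on any simply connected region avoiding the zeros of $f$, we see that $\log|f(s)| = \Re\{\log f(s)\}$. Since $f$ is holomorphic and nonvanishing there, $\log f(s)$ is holomorphic, so its derivative is $\frac{d}{ds}\log f(s) = \frac{f'(s)}{f(s)}$, independent of the direction of differentiation.

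Next I would exploit the fact that for a holomorphic function $g(s) = u(\sigma,t) + i v(\sigma,t)$ one has $g'(s) = \frac{\partial u}{\partial\sigma} + i\frac{\partial v}{\partial\sigma} = \frac{\partial v}{\partial t} - i\frac{\partial u}{\partial t}$ by the Cauchy–Riemann equations. Applying this with $g = \log f$, so that $u = \log|f(s)|$, gives immediately
\begin{align}
\Re\left\{\frac{f'(s)}{f(s)}\right\} = \frac{\partial}{\partial\sigma}\log|f(s)| = \frac{1}{|f(s)|}\frac{\partial|f(s)|}{\partial\sigma},
\end{align}
which is (\ref{Re-f'_f}), and likewise $\Im\{f'/f\} = \partial_\sigma v = -\partial_t u = -\partial_t \log|f(s)|$, which rearranges to (\ref{Im-f'_f}). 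The chain rule $\partial_\sigma \log|f| = |f|^{-1}\partial_\sigma|f|$ is valid precisely because $|f(s)| > 0$ wherever $f(s) \neq 0$.

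A cleaner variant, if one prefers to avoid invoking a branch of $\log f$, is to start from $|f(s)|^2 = f(s)\overline{f(s)}$ and differentiate directly: $2|f|\,\partial_\sigma|f| = f'\overline{f} + f\overline{f'} = 2\Re\{f'\overline{f}\}$, so $|f|^{-1}\partial_\sigma|f| = \Re\{f'\overline{f}\}/|f|^2 = \Re\{f'/f\}$, and similarly for the $t$-derivative, where the extra minus sign in (\ref{Im-f'_f}) comes from $\partial_t\overline{f} = \overline{\partial_t f} = \overline{i f'} = -i\overline{f'}$. I would probably present this second route since it is self-contained.

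There is no real obstacle here; the only point requiring a word of care is the hypothesis $f(s)\neq 0$, which is needed both to divide by $|f(s)|$ and to ensure $\log|f(s)|$ is differentiable (the modulus has a corner/singularity at a zero). Everything else is the Cauchy–Riemann equations plus the product rule, so the ``hard part'' is merely organizing the identity cleanly rather than overcoming any genuine difficulty.
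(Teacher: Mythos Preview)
Your proposal is correct; both the $\log f$ route and the $|f|^2 = f\overline f$ route are valid and standard, and you have handled the sign in (\ref{Im-f'_f}) correctly via $\partial_t f = i f'$. The paper itself does not actually supply a proof here but simply refers the reader to the external report \cite{kobayashi-report3}, so there is nothing to compare against; your self-contained argument fills in what the paper omits.
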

\begin{proof}
See Kobayashi \cite{kobayashi-report3}.
\end{proof}
By differentiating the logarithm of $f(s)$ further, we obtain
\begin{corollary}\label{corollary-2nd-dif}
For the holomorphic function $f(s)$ of Lemma \ref{lemma-1} the following identities also hold:
\begin{align}
\frac{1}{|f(s)|}\frac{\partial^2 |f(s)|}{\partial\sigma^2}-\left(\frac{1}{|f(s)|} \frac{\partial |f(s)|}{\partial\sigma}\right)^2 &=\Re\left\{\frac{f''(s)}{f(s)}-\left(\frac{f'(s)}{f(s)}\right)^2\right\},\label{dif-log-f-wrt-sigma-twice}\\
\frac{1}{|f(s)|}\frac{\partial^2 |f(s)|}{\partial t^2}-\left(\frac{1}{|f(s)|}\frac{\partial |f(s)|}{\partial t}\right)^2 &=-\Re\left\{\frac{f''(s)}{f(s)}-\left(\frac{f'(s)}{f(s)}\right)^2\right\}.\label{dif-log-f-wrt-t-twice}
\end{align}
wherever $f(s)\neq 0$, where $\displaystyle{f''(s)=\frac{d^2 f(s)}{ds^2}}$.
\end{corollary}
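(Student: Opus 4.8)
The plan is to obtain both identities from Lemma \ref{lemma-1} by a single further differentiation, using the fact that the logarithmic derivative $f'(s)/f(s)$ is itself holomorphic wherever $f(s)\neq 0$. First I would record the purely algebraic observation that for any positive, twice‑differentiable real function $\phi$ of a real variable $x$,
\begin{align*}
\frac{\partial}{\partial x}\left(\frac{1}{\phi}\,\frac{\partial\phi}{\partial x}\right)=\frac{1}{\phi}\,\frac{\partial^2\phi}{\partial x^2}-\left(\frac{1}{\phi}\,\frac{\partial\phi}{\partial x}\right)^2 .
\end{align*}
Applied with $\phi=|f(s)|$ and $x=\sigma$ or $x=t$, this shows that the left‑hand sides of (\ref{dif-log-f-wrt-sigma-twice}) and (\ref{dif-log-f-wrt-t-twice}) are exactly $\dfrac{\partial^2}{\partial\sigma^2}\log|f(s)|$ and $\dfrac{\partial^2}{\partial t^2}\log|f(s)|$, since Lemma \ref{lemma-1} already identifies $\dfrac{1}{|f(s)|}\dfrac{\partial|f(s)|}{\partial x}$ with $\dfrac{\partial}{\partial x}\log|f(s)|$.

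Next I would differentiate the two identities of Lemma \ref{lemma-1} once more. Setting $g(s)=f'(s)/f(s)$, which is holomorphic on $\{s:f(s)\neq 0\}$, one has
\begin{align*}
g'(s)=\frac{d}{ds}\left(\frac{f'(s)}{f(s)}\right)=\frac{f''(s)}{f(s)}-\left(\frac{f'(s)}{f(s)}\right)^2 .
\end{align*}
Because $g$ is holomorphic, differentiation may be carried through $\Re$ and $\Im$, and the Cauchy–Riemann equations give $\partial g/\partial\sigma=g'$ and $\partial g/\partial t=i\,g'$. Hence, starting from $\partial_\sigma\log|f|=\Re\{g\}$ (equation (\ref{Re-f'_f})) we obtain $\partial_\sigma^2\log|f|=\Re\{g'\}$, which together with the first‑paragraph identity is (\ref{dif-log-f-wrt-sigma-twice}); and starting from $\partial_t\log|f|=-\Im\{g\}$ (equation (\ref{Im-f'_f})) we obtain $\partial_t^2\log|f|=-\Im\{i\,g'\}=-\Re\{g'\}$, which is (\ref{dif-log-f-wrt-t-twice}).

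I would also note the shortcut that only one of the two computations is strictly necessary: since $\log|f(s)|$ is harmonic away from the zeros of $f$, we have $\partial_\sigma^2\log|f|+\partial_t^2\log|f|=0$, so (\ref{dif-log-f-wrt-t-twice}) follows immediately from (\ref{dif-log-f-wrt-sigma-twice}). There is no genuine obstacle here; the only points requiring care are the bookkeeping of the ``square'' term when rewriting $\partial_x^2\log|f|$ as the combination of $\partial_x^2|f|$ and $(\partial_x|f|)^2$, and the sign in $\partial_t g=i\,g'$, where the elementary identity $\Im\{i w\}=\Re\{w\}$ is what produces the minus sign on the right‑hand side of (\ref{dif-log-f-wrt-t-twice}).
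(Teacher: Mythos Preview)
Your proof is correct and follows exactly the approach the paper indicates (``by differentiating the logarithm of $f(s)$ further''), since the paper itself defers the details to the external report \cite{kobayashi-report3}. The bookkeeping with $g=f'/f$, the Cauchy--Riemann relation $\partial_t g=i\,g'$, and the harmonicity shortcut are all handled cleanly; there is nothing to add.
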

\begin{proof}
See Kobayashi \cite{kobayashi-report3}.
\end{proof}

\subsection{The Product Formula for $\mathbf{\xi(s)}$}\label{sec-Hadamard}

Hadamard \cite{hadamard} obtained in 1893 the following product-form representation
\begin{align}\label{Hadamard-product}
\xi(s)=\tfrac{1}{2}e^{Bs}\prod_n \left[\left(1-\frac{s}{\rho_n}\right)e^{s/\rho_n}\right],
\end{align}
using Weirstrass's factorization theorem, which asserts that any entire function can be represented by a product involving its zeroes.
In (\ref{Hadamard-product}), the product is taken over all (infinitely many) zeros $\rho_n$'s of the function $\xi(s)$, and $B$ is a real
constant.  Detailed accounts of this formula are found in many books (see e.g., Edwards \cite{edwards}, Iwaniec \cite{iwaniec}
Patterson \cite{patterson} and Titchmarsh \cite{titchmarsh}).
Sondow and Dumitrescu \cite{sondow} and Matiyasevich et al. \cite{matiyasevich} explored the use of the above product
form, hoping to find a possible proof of the Riemann hypothesis.

By taking the logarithm of (\ref{Hadamard-product}) and differentiating it, we obtain
\begin{align}\label{Hadamard-log-dif}
\frac{\xi'(s)}{\xi(s)}=B+\sum_n\left(\frac{1}{s-\rho_n}+\frac{1}{\rho_n}\right).
\end{align}
From the definition of $\xi(s)$ in (\ref{xi-def}), we have
\begin{align}\label{log-dif-xi}
\frac{\xi'(s)}{\xi(s)}=\frac{1}{s}+\frac{1}{s-1}-\frac{\log\pi}{2}+\Psi(\tfrac{s}{2})+\frac{\zeta'(s)}{\zeta(s)},
\end{align}
where
\begin{align*}%\label{diagamma-def}
\Psi(s)=\frac{\Gamma'(s)}{\Gamma(s)}
\end{align*}
is the digamma function.

We equate (\ref{Hadamard-log-dif}) to (\ref{log-dif-xi}), use the identity $\displaystyle{\Psi(\tfrac{s}{2}+1)=\tfrac{1}{s}+\tfrac{1}{2}\Psi(\tfrac{s}{2})}$, and set $s=0$, obtaining
\begin{align}
B+\sum_n\left(-\frac{1}{\rho_n}+\frac{1}{\rho_n}\right)=-1-\tfrac{1}{2}+\tfrac{1}{2}\Psi(1)+\frac{\zeta'(0)}{\zeta(0)}.
\end{align}
By using $\zeta'(0)/\zeta(0)=\log(2\pi)$, and $\Psi(1)=\Gamma'(1)=-\gamma$ (where
$\gamma=0.5772218\ldots$ is the Euler constant), we determine the constant $B$  as
\begin{align}\label{constant-B}
B=\log(2\pi)-1-\tfrac{1}{2}\log\pi-\gamma/2=\tfrac{1}{2}\log(4\pi)-1-\gamma/2=-0.0230957\ldots.
\end{align}

Davenport (\cite{davenport} pp. 81-82) derives an alternative expression for $B$. The reflective property of
$\xi(s)$ gives the identity
\begin{align}
\frac{\xi'(s)}{\xi(s)}=-\frac{\xi'(1-s)}{\xi(1-s)},
\end{align}
which, together with (\ref{Hadamard-log-dif}), yields
\begin{align}\label{Eqn-for-B}
B+\sum_n\left(\frac{1}{s-\rho_n}+\frac{1}{\rho_n}\right)=-B-\sum_n\left(\frac{1}{1-s-\rho_n}+\frac{1}{\rho_n}\right).
\end{align}
Thus,
\begin{align}\label{B-def}
B&=-\sum_n\frac{1}{\rho_n}-\frac{1}{2}\left(\sum_n\frac{1}{s-\rho_n}-\sum_n\frac{1}{s-(1-\rho_n)}\right)\nonumber\\
&=-\sum_n\frac{1}{\rho_n}=-2\sum_{n=1}^\infty\frac{\sigma_n}{\sigma_n^2+t_n^2},
\end{align}
Note that the two summed terms in the parenthesis in the first line of the above cancel to each other, because if $\rho_n$ is a zero, so is $1-\rho_n$.
To obtain the final expression in the above, we use the property that when $\rho_n=\sigma_n+it_n$ is a zero, so is its complex conjugate $\rho_n^*=\sigma_n-it_n$, thus we enumerate zeros in such a way that
 $\rho_n^*=\rho_{-n}$.

%It will be worth noting that (\ref{B-def}) implies that the LHS of (\ref{Hadamard-log-dif}) converges to zero as $s\to\infty$, i.e., $\xi(s)$ converges to some constant as $s\to\infty$.
By substituting (\ref{B-def}) back into (\ref{Hadamard-product}), we obtain
\begin{align}\label{simple-product}
\xi(s)&=\tfrac{1}{2}\exp\left(-s\sum_n \frac{1}{\rho_n}\right) \prod_n\left(1-\frac{s}{\rho_n}\right)e^{s/\rho_n}
=\tfrac{1}{2}\prod_n e^{-s/\rho_n}\left(1-\frac{s}{\rho_n}\right)e^{s/\rho_n}\nonumber\\
&=\tfrac{1}{2}\prod_n\left(1-\frac{s}{\rho_n}\right).
\end{align}
This is nothing but the product form
\begin{align*}%\label{product-Riemann}
 \xi(s)=\xi(0)\prod_n\left(1-\frac{s}{\rho_n}\right),
\end{align*}
which Edwards (see \cite{edwards} p. 18 and pp. 46-47) attributes to Riemann.

Then, Eqn.(\ref{Hadamard-log-dif}) is simplified to
\begin{align}\label{dif-log-simple-product}
\frac{\xi'(s)}{\xi(s)}=\sum_n\frac{1}{s-\rho_n}.
\end{align}
From this and Lemma \ref{lemma-1}, we have
\begin{align}\label{sigma-sigma_n}
\frac{1}{|\xi(s)|}\frac{\partial|\xi(s)|}{\partial \sigma}
&=\Re\left(\sum_n \frac{1}{s-\rho_n}\right)=\sum_n\frac{\sigma-\sigma_n}{(\sigma-\sigma_n)^2+(t-t_n)^2}.
\end{align}
Thus, we arrive at the following theorem concerning the monotonicity of the $|\xi(s)|$ function, which Sondow and
Dumitrescu \cite{sondow} proved in a little more complicated way based on (\ref{Hadamard-product}) instead of
(\ref{simple-product}).  Matiyaesevich et al. \cite{matiyasevich} also discuss the monotonicity of the $\xi(s)$ and
other functions.
%
% Monotonicity Theorem
%
\begin{theorem}[Monotonicity of Modulus Function $|\xi(s)|$]\label{theorem-modulus-monotonicity}
Let $\sigma_{\sup}$ be the supremum of the real parts of all zeros:
\[ \sigma_{\sup}=\sup_n\{\sigma_n\}.\]
Then the modulus $|\xi(\sigma+it)|$ is a monotone increasing function of $\sigma$ in the region $\sigma> \sigma_{\sup}$
for all real $t$. Likewise, the modulus is a monotone decreasing function of $\sigma$ in the region
$\sigma<\sigma_{\inf}$,
where
\[ \sigma_{\inf}=\inf_n\{ \sigma_n\}=1-\sigma_{\sup}.\]
\begin{proof}
It is apparent from (\ref{sigma-sigma_n}) that $|\xi(s)|$ is a monotone increasing function of $\sigma$ in the range
$\sigma>\sigma_{\sup}\geq \tfrac{1}{2}$ for all $t$.  Because of the reflective property (\ref{symmetric_property}) it then readily follows that $|\xi(s)|$ is a monotone decreasing function of $\sigma$ in the range $\sigma<1-\sigma_{\sup}\leq \tfrac{1}{2}$.
\end{proof}

\end{theorem}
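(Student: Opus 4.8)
The plan is to read the theorem off directly from the partial–fraction identity (\ref{sigma-sigma_n}), which (via Lemma \ref{lemma-1} and (\ref{dif-log-simple-product})) already expresses the logarithmic $\sigma$-derivative of the modulus as
\[
\frac{1}{|\xi(s)|}\frac{\partial |\xi(s)|}{\partial\sigma}=\sum_n\frac{\sigma-\sigma_n}{(\sigma-\sigma_n)^2+(t-t_n)^2}.
\]
Since $|\xi(s)|>0$ off the zeros, the sign of $\partial|\xi(\sigma+it)|/\partial\sigma$ is the sign of the series on the right. In the half-plane $\sigma>\sigma_{\sup}$ there are no zeros, so $|\xi|$ is smooth and nonvanishing there, and moreover $\sigma-\sigma_n\ge\sigma-\sigma_{\sup}>0$ for every $n$ while each denominator is positive; hence every summand is strictly positive and so is the (convergent) sum. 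Therefore $\partial|\xi(\sigma+it)|/\partial\sigma>0$ throughout $\sigma>\sigma_{\sup}$ for each fixed real $t$, i.e.\ $|\xi(\sigma+it)|$ is strictly increasing in $\sigma$ there.

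For the second half I would not repeat the computation but instead exploit the reflective property (\ref{symmetric_property}). First, $\xi(1-s)=\xi(s)$ forces $1-\rho_n$ to be a zero whenever $\rho_n$ is, so the set of real parts of zeros is symmetric about $\tfrac12$; hence $\sigma_{\inf}=1-\sigma_{\sup}$ (as claimed) and, combined with $\sigma_{\inf}\le\sigma_{\sup}$, this gives $\sigma_{\sup}\ge\tfrac12$, so the region $\sigma>\sigma_{\sup}$ is a genuine half-plane contained in $\sigma>\tfrac12$. Now $|\xi(1-s)|=|\xi(s)|$, so for $\sigma<\sigma_{\inf}=1-\sigma_{\sup}$ we have $1-\sigma>\sigma_{\sup}$ and $|\xi(\sigma+it)|=|\xi\bigl((1-\sigma)-it\bigr)|$; as $\sigma$ decreases, $1-\sigma$ increases within the region governed by the first half (with $t$ replaced by $-t$), so the right-hand side increases, and therefore $|\xi(\sigma+it)|$ is strictly decreasing in $\sigma$ on $\sigma<\sigma_{\inf}$ for each fixed real $t$.

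Since the analytic substance --- Hadamard's factorization and the resulting identities (\ref{dif-log-simple-product}) and (\ref{sigma-sigma_n}) --- is already in place, there is no real obstacle; the only point needing care is the term-by-term sign reading, i.e.\ that $\sum_n 1/(s-\rho_n)$, with the conjugate zeros $\rho_n,\rho_n^{\ast}$ paired as in the enumeration used for (\ref{B-def}), converges locally uniformly away from the zeros, so that positivity of each $\Re\{1/(s-\rho_n)\}$ transfers to the sum and differentiation under the summation sign is legitimate. With that observed, both assertions follow as above.
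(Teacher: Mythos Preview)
Your proposal is correct and follows essentially the same approach as the paper: read off positivity of $\partial|\xi|/\partial\sigma$ directly from the partial-fraction identity (\ref{sigma-sigma_n}) when $\sigma>\sigma_{\sup}$, and then invoke the reflective property (\ref{symmetric_property}) to obtain the decreasing statement for $\sigma<\sigma_{\inf}$. You spell out a few points (the symmetry of the zero set under $\rho\mapsto 1-\rho$, the substitution $s\mapsto 1-s$, and the convergence needed for the termwise sign argument) that the paper leaves implicit, but the argument is the same.
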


Thus, if all zeta zeros are located on the critical line, i.e., if $\sigma_{\sup}=\sigma_{\inf}=\tfrac{1}{2}$, the derivative of the modulus $|\xi(s)|$ is positive for $\sigma>\tfrac{1}{2}$, and negative for $\sigma<\tfrac{1}{2}$. Thus, we have shown the necessity of
monotonicity of the modulus function $|\xi(s)|$, which has been one of major concerns towards a proof of the Riemann
hypothesis.
\begin{corollary}[Monotonicity of Modulus Function $|\xi(s)|$, if the Riemann hypothesis is true]\label{corollary-modulus-monotonicity}
If all zeta zeros are on the critical line, the modulus $|\xi(\sigma+it)|$ is a monotone increasing function of
$\sigma$ in the right half plane, $\sigma>\tfrac{1}{2}$.
Likewise, the modulus is a monotone decreasing function of $\sigma$ in the left half plane, $\sigma<\tfrac{1}{2}$.
\begin{proof}
The above discussion that has led to this corollary should suffice as a proof.
\end{proof}
\end{corollary}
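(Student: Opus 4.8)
The plan is to obtain this corollary as the degenerate case $\sigma_{\sup}=\sigma_{\inf}=\tfrac{1}{2}$ of Theorem~\ref{theorem-modulus-monotonicity}, but I would spell the argument out directly from the series (\ref{sigma-sigma_n}) so that the sign of $\partial|\xi(s)|/\partial\sigma$ is made completely explicit rather than merely inferred from ``the above discussion.''

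First I would invoke the Riemann hypothesis in the form: every zero $\rho_n=\sigma_n+it_n$ of $\xi(s)$ has $\sigma_n=\tfrac{1}{2}$. Since the zeros of $\xi(s)$ are exactly the non-trivial zeros of $\zeta(s)$, this gives $\xi(s)\neq 0$ whenever $\sigma\neq\tfrac{1}{2}$, so Lemma~\ref{lemma-1} together with (\ref{dif-log-simple-product}) --- hence the identity (\ref{sigma-sigma_n}) --- is valid throughout each of the open half-planes $\sigma>\tfrac{1}{2}$ and $\sigma<\tfrac{1}{2}$.

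Next I would substitute $\sigma_n=\tfrac{1}{2}$ into (\ref{sigma-sigma_n}), giving
\[
\frac{1}{|\xi(s)|}\,\frac{\partial|\xi(s)|}{\partial\sigma}=\sum_n\frac{\sigma-\tfrac{1}{2}}{\bigl(\sigma-\tfrac{1}{2}\bigr)^2+(t-t_n)^2}.
\]
For a fixed $\sigma>\tfrac{1}{2}$ and any real $t$, each denominator is strictly positive and never vanishes, so every summand is strictly positive; multiplying by $|\xi(s)|>0$ yields $\partial|\xi(s)|/\partial\sigma>0$, and integrating in $\sigma$ along a horizontal segment shows $|\xi(\sigma+it)|$ is strictly increasing on $(\tfrac{1}{2},\infty)$. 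For $\sigma<\tfrac{1}{2}$ every summand is strictly negative, so $\partial|\xi(s)|/\partial\sigma<0$ and $|\xi(\sigma+it)|$ is strictly decreasing on $(-\infty,\tfrac{1}{2})$; alternatively the decreasing half follows from the increasing half via the reflection $|\xi(1-s)|=|\xi(s)|$ coming from (\ref{symmetric_property}).

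There is essentially no obstacle here: the only facts needed beyond RH are that the series in (\ref{dif-log-simple-product}) converges and may be differentiated termwise, which was already established in Section~\ref{sec-Hadamard}, and that $|\xi(s)|\neq 0$ off the critical line, which is precisely what RH supplies. So the statement is genuinely a corollary, and the only ``work'' is the elementary sign bookkeeping above; it records the fact that RH forces the modulus $|\xi(s)|$ to be horizontally monotone on each side of the critical line, establishing the necessity of the monotonicity property.
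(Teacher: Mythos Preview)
Your proposal is correct and follows essentially the same approach as the paper: you specialize Theorem~\ref{theorem-modulus-monotonicity} to the case $\sigma_n=\tfrac{1}{2}$ by reading off the sign of each summand in (\ref{sigma-sigma_n}), which is exactly the ``above discussion'' the paper's one-line proof defers to. The only difference is that you spell out the sign bookkeeping and the reflection alternative explicitly, whereas the paper leaves this implicit.
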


\subsection{Functions $\mathbf{a(\lambda,t), b(\lambda,t), \alpha(\lambda,t), \beta(\lambda,t)}$ and Their Properties}\label{sub-sec-a(t)}

Take the imaginary part of both sides of (\ref{dif-log-simple-product}) and set
$s=\tfrac{1}{2}+it$. By noting that $\xi(s)$ is real for $\sigma=\tfrac{1}{2}$, we obtain
\begin{align}
\left.\frac{1}{\xi(s)}\frac{\partial \Im(\xi(s))}{\partial \sigma}\right|_{\sigma=\tfrac{1}{2}}
=\sum_n\frac{t-t_n}{(t-t_n)^2+(\tfrac{1}{2}-\sigma_n)^2}.
\end{align}
Recall the function $b(t)$ defined in (\ref{B-straight-line}). Then, the LHS of the above is
$\displaystyle{\frac{b(t)}{\Xi(t)}}$, where
\begin{align}
\Xi(t)&=\xi\left(\tfrac{1}{2}+it\right)=\frac{1}{2}\prod_n\left(1-\frac{\tfrac{1}{2}+it}{\sigma_n+it_n}\right)
                                                  \label{expression-c(t)}\\
b(t)&=\left.\frac{\partial \Im\left\{\xi(s)\right\}}{\partial\sigma}\right|_{\sigma=\tfrac{1}{2}} =\Xi(t)\cdot\sum_n\frac{t-t_n}{(t-t_n)^2+(\tfrac{1}{2}-\sigma_n)^2}.\label{expression-b(t)}
\end{align}
Differentiate (\ref{dif-log-simple-product}) once more, and we obtain
\begin{align*}%\label{dif-simple-product}
\frac{\xi''(s)\xi(s)-{\xi'}^2(s)}{\xi^2(s)}=-\sum_n\frac{1}{(s-\rho_n)^2},
\end{align*}
which can be rearranged to yield
\begin{align}\label{2nd-derivative}
\frac{\xi''(s)}{\xi(s)}=\left(\frac{\xi'(s)}{\xi(s)}\right)^2-\sum_n\frac{1}{(s-\rho_n)^2}.
\end{align}
Taking the real part of both sides, and evaluating them at $s=\tfrac{1}{2}+it$, we find
\begin{align}\label{2a/c}
\frac{2a(t)}{\Xi(t)}
&=-\left(\frac{b(t)}{\Xi(t)}\right)^2+\sum_n\frac{(t-t_n)^2-(\tfrac{1}{2}-\sigma_n)^2}
{\left[(\tfrac{1}{2}-\sigma_n)^2+(t-t_n)^2\right]^2}\nonumber\\
%&=-\left(\frac{b(t)}{A\left(\tfrac{1}{2}, t\right)}\right)^2-\left(\frac{b(t)}{A\left(\tfrac{1}{2}, t\right)}\right)'\nonumber\\
&=\frac{-b^2(t)^2+b'(t)\Xi(t)-b(t)\Xi'(t)}{\Xi^2(t)}.
\end{align}
where
\begin{align}\label{2a-A''}
2a(t)=\left.\frac{\partial^2 \xi(s)}{\partial\sigma^2}\right|_{\sigma=\tfrac{1}{2}}.
\end{align}

From the Cauchy-Riemann equation we find
\begin{align}\label{c-CS-eq}
 \Xi'(t)=-\left.\frac{\partial\Im\left\{\xi(s)\right\}}{\partial\sigma}\right|_{\sigma=\tfrac{1}{2}}=-b(t).
\end{align}
By substituting this into (\ref{2a/c}), we obtain a surprisingly simple result:
\begin{align}\label{a=b'-c''}
a(t)&=\tfrac{1}{2}b'(t)=-\tfrac{1}{2}\Xi''(t),
\end{align}
which can be alternatively obtained by applying the Laplace equation to (\ref{2a-A''}).

The above formulae carry over to any point $s=\tfrac{1}{2}+\lambda+it$:
\begin{lemma} \label{lemma-lambda-t}
Let us define
\begin{align}
2a(\lambda,t)&=\frac{\partial^2 \Re\left\{\xi(s)\right\}}{\partial\lambda^2}
=-\Re\left\{\xi''(t)\right\},\label{def-a(s)}\\
b(\lambda,t)&=\frac{\partial \Im\left\{\xi(s)\right\}}{\partial\lambda},\label{def-b(s)}
\end{align}
where $\xi''(s)$ is the second partial derivative of $\xi(s)$ with respect to $t$.
Then, the following relations hold:
\begin{align}
a(\lambda,t)&=\tfrac{1}{2}b'(\lambda,t), \label{a-b-relation}\\
b(\lambda,t)&=- \Re\left\{\xi'(t)\right\} \label{b-c-relation}
\end{align}
\begin{proof}
By applying the Cauchy-Riemann equations and Laplace's equation, the above relations can be easily derived.
\end{proof}
\end{lemma}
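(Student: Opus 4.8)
The plan is to reduce everything to the Cauchy--Riemann equations and to the harmonicity of the real and imaginary parts of $\xi$, exploiting that $\xi$ is \emph{entire}, so that $U:=\Re\{\xi(s)\}$ and $V:=\Im\{\xi(s)\}$ are $C^\infty$ functions of $(\lambda,t)$ on the whole $(\lambda,t)$ plane; here and below $s=\tfrac12+\lambda+it$, so that $\partial/\partial\sigma=\partial/\partial\lambda$. The first step is to record the Cauchy--Riemann equations in the $(\lambda,t)$ variables,
\[
\frac{\partial U}{\partial\lambda}=\frac{\partial V}{\partial t},\qquad
\frac{\partial U}{\partial t}=-\frac{\partial V}{\partial\lambda},
\]
together with Laplace's equation $U_{\lambda\lambda}+U_{tt}=0$ (and likewise $V_{\lambda\lambda}+V_{tt}=0$). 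Because $\xi$ is entire these identities hold at every point, so no analogue of the ``$\xi(s)\neq 0$'' restriction of Lemma \ref{lemma-1} and Corollary \ref{corollary-2nd-dif} is needed.

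Given this setup, each assertion follows by a one-line manipulation. For the second equality in the definition (\ref{def-a(s)}): $2a(\lambda,t)=U_{\lambda\lambda}=-U_{tt}=-\Re\{\xi''(t)\}$ by Laplace's equation, since $\Re\{\xi''(t)\}=\partial^2 U/\partial t^2$. For (\ref{b-c-relation}): apply the second Cauchy--Riemann equation to (\ref{def-b(s)}), giving $b(\lambda,t)=V_\lambda=-U_t=-\partial\Re\{\xi(s)\}/\partial t=-\Re\{\xi'(t)\}$. For (\ref{a-b-relation}): differentiate the first Cauchy--Riemann equation $U_\lambda=V_t$ with respect to $\lambda$ and interchange the order of differentiation (legitimate by the smoothness of $V$), obtaining $2a(\lambda,t)=U_{\lambda\lambda}=V_{t\lambda}=V_{\lambda t}=\partial b(\lambda,t)/\partial t=b'(\lambda,t)$, i.e.\ $a(\lambda,t)=\tfrac12 b'(\lambda,t)$.

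I do not anticipate a genuine obstacle here; the content is purely the standard calculus of holomorphic functions. The only points that demand a little care are the bookkeeping identification $\partial/\partial\sigma=\partial/\partial\lambda$, keeping track of which prime denotes $d/dt$ rather than $d/ds$, and noting that every appeal to mixed-partial symmetry and to Laplace's equation is globally valid precisely because $\xi$ is entire. As a consistency check, specializing to $\lambda=0$ recovers exactly (\ref{c-CS-eq}) and (\ref{a=b'-c''}), so the lemma is the expected ``moving-vertical-line'' version of the formulas already established on the critical line.
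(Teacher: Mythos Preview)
Your proposal is correct and follows exactly the approach the paper indicates: the paper's own proof consists solely of the sentence ``By applying the Cauchy--Riemann equations and Laplace's equation, the above relations can be easily derived,'' and you have simply written out those one-line derivations explicitly. Your bookkeeping remarks (identifying $\partial/\partial\sigma$ with $\partial/\partial\lambda$, noting that the prime denotes $\partial/\partial t$, and observing that no $\xi(s)\neq 0$ hypothesis is needed) are accurate and the consistency check at $\lambda=0$ is apt.
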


We now derive similar functions and their relations by interchanging
$\Re\left\{\xi(s)\right\}$ and
$\Im\left\{\xi(s)\right\}$.
% Corollary:
\begin{corollary}\label{corollary-dual}
Let us define
\begin{align}
2\alpha(\lambda,t)&=\frac{\partial^2 \Im\left\{\xi(s)\right\}}{\partial\lambda^2}
=- \Im\left\{\xi''(s)\right\},\label{def-alpha)}\\
\beta(\lambda,t)&=\frac{\partial \Re\left\{\xi(s)\right\}}{\partial\lambda}.\label{def-beta}
\end{align}
Then the following relations hold:
\begin{align}
\alpha(\lambda,t)&=-\tfrac{1}{2}\beta'(\lambda,t), \label{alpha-beta}\\
\beta(\lambda,t)&=\Im\left\{\xi'(s)\right\}. \label{beta-gamma}\\
\frac{\partial a(\lambda,t)}{\partial\lambda} &=\alpha'(\lambda,t),~~~~
\frac{\partial \alpha(\lambda,t)}{\partial\lambda}=-a'(\lambda,t)\\
\frac{\partial \beta(\lambda,t)}{\partial\lambda}&= b'(\lambda,t),~~~~
\frac{\partial b(\lambda,t)}{\partial\lambda}=-\beta'(\lambda,t).
\end{align}
\begin{proof}
By applying the Cauchy-Riemann equations and Laplace's equation, the above relations can be easily derived.
\end{proof}
\end{corollary}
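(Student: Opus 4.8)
The plan is to reduce every identity in Corollary~\ref{corollary-dual} to the two ingredients already invoked for Lemma~\ref{lemma-lambda-t}: the Cauchy--Riemann equations for the holomorphic function $\xi$, and the harmonicity of its real and imaginary parts. Write $\xi(s)=u+iv$ with $u=\Re\{\xi(s)\}$ and $v=\Im\{\xi(s)\}$, regarded as functions of $(\sigma,t)$. Since $s=\tfrac12+\lambda+it$ we have $\partial_\lambda=\partial_\sigma$, and (following the convention fixed in Lemma~\ref{lemma-lambda-t}) a prime denotes $\partial_t$, so $\xi'(s)=\partial_t\xi=u_t+iv_t$ and $\xi''(s)=\partial_t^2\xi=u_{tt}+iv_{tt}$. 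The Cauchy--Riemann equations read $u_\sigma=v_t$ and $u_t=-v_\sigma$, and harmonicity gives $u_{\sigma\sigma}=-u_{tt}$ and $v_{\sigma\sigma}=-v_{tt}$. In this notation the four quantities in the corollary are
\begin{align*}
2a(\lambda,t)=u_{\sigma\sigma},\qquad b(\lambda,t)=v_\sigma,\qquad 2\alpha(\lambda,t)=v_{\sigma\sigma},\qquad \beta(\lambda,t)=u_\sigma,
\end{align*}
so the corollary is simply the ``conjugate'' of Lemma~\ref{lemma-lambda-t} under the interchange $u\leftrightarrow v$.

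First I would dispatch the two defining reformulations: $\Im\{\xi''(s)\}=v_{tt}=-v_{\sigma\sigma}=-2\alpha$ by harmonicity of $v$, and $\Im\{\xi'(s)\}=v_t=u_\sigma=\beta$ by the Cauchy--Riemann relation $u_\sigma=v_t$, which is \eqref{beta-gamma}. Then \eqref{alpha-beta} follows by differentiating $\beta=u_\sigma$ in $t$: using equality of mixed partials together with $u_t=-v_\sigma$ gives $\beta'=u_{\sigma t}=\partial_\sigma u_t=-v_{\sigma\sigma}=-2\alpha$, that is, $\alpha=-\tfrac12\beta'$.

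The four cross-derivative relations go the same way: each time one trades a $t$-derivative for a $\sigma$-derivative via Cauchy--Riemann, collapsing a second-order ``$tt$'' into ``$-\sigma\sigma$'' by harmonicity when it occurs. For instance $\partial_\lambda\beta=u_{\sigma\sigma}$ while $b'=\partial_t v_\sigma=\partial_\sigma v_t=\partial_\sigma u_\sigma=u_{\sigma\sigma}$, so $\partial_\lambda\beta=b'$; and $\partial_\lambda b=v_{\sigma\sigma}=-\partial_\sigma u_t=-u_{\sigma t}=-\beta'$. The pair involving $a$ and $\alpha$ needs one further differentiation: $\partial_\lambda a=\tfrac12 u_{\sigma\sigma\sigma}$ and $\alpha'=\tfrac12 v_{\sigma\sigma t}=\tfrac12\partial_\sigma^2 v_t=\tfrac12\partial_\sigma^2 u_\sigma=\tfrac12 u_{\sigma\sigma\sigma}$, hence $\partial_\lambda a=\alpha'$; symmetrically $\partial_\lambda\alpha=\tfrac12 v_{\sigma\sigma\sigma}$ and $-a'=-\tfrac12 u_{\sigma\sigma t}=-\tfrac12\partial_\sigma^2 u_t=\tfrac12\partial_\sigma^2 v_\sigma=\tfrac12 v_{\sigma\sigma\sigma}$, hence $\partial_\lambda\alpha=-a'$.

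None of this is deep. The only real hazard is bookkeeping --- keeping the sign correct in each Cauchy--Riemann substitution, and remembering that in this paper a prime denotes a $t$-derivative rather than a complex derivative --- so that, rather than any genuine mathematical obstacle, is where I would expect a slip to occur. A tidy way to organize the whole proof is to establish once the ``rotation'' rule $\partial_t F=-\partial_\sigma \tilde{F}$ relating a harmonic function $F$ to its harmonic conjugate $\tilde{F}$, and then read off all of these relations (together with the $a$--$\alpha$ pair, after a second application) as instances of it.
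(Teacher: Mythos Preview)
Your proof is correct and follows exactly the approach the paper indicates: the paper's own proof is the single sentence ``By applying the Cauchy--Riemann equations and Laplace's equation, the above relations can be easily derived,'' and you have simply supplied those routine computations in full, correctly reading the paper's (nonstandard) convention that a prime on $\xi$ here denotes $\partial_t$ rather than the complex derivative.
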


\section{The Fourier transform representation of $\xi(s)$ } \label{sec-Fourier-transform}
\subsection{Integral representation of $\mathbf{\xi(s)}$}
We begin with the following integral representation of $\xi(s)$ (see Appendix \ref{append-A}) found in Edwards \cite{edwards}, p.16.
\begin{align}\label{edwards-p16}
\xi(s)&=\frac{1}{2}-\frac{s(1-s)}{2}\int_1^\infty\psi(x)\left(x^{s/2}+x^{(1-s)/2}\right)\frac{dx}{x},
\end{align}
where
\begin{align}\label{func-psi}
\psi(x)=\sum_{n=1}^\infty e^{-n^2\pi x}
\end{align}
is called the \emph{theta function}.
By applying integration by parts to (\ref{edwards-p16}) and Jacobi's identity for the theta function\footnote{Jacobi's identity for the theta function $\psi(x)$ is
\begin{align}\label{Jacobi}
2\psi(x)+1=x^{-1/2}\left(2\psi(x^{-1})+1\right).
\end{align}
} Edwards (\cite{edwards}, p. 17) gives the following expression  by generalizing Riemann's result, which holds for any complex number $s$:
\begin{align}\label{xi-cosh}
\xi(s)=4\int_1^\infty\frac{d[x^{3/2}\psi'(x)]}{dx} x^{-1/4}\cosh\left[\tfrac{1}{2}\left(s-\tfrac{1}{2}\right)\log x\right]\,dx.
\end{align}
By writing
\begin{align}\label{C-x}
\frac{d[x^{3/2}\psi'(x)]}{dx}x^{-1/4}=\pi x^{1/4}D(x)
\end{align}
with $D(x)$ defined by
\begin{align}\label{D-x}
D(x)&=\sum_{n=1}^\infty n^2(n^2\pi x-\tfrac{3}{2})e^{-n^2\pi x}>0,~~\mbox{for}~~x\geq 1,
\end{align}
we can write (\ref{xi-cosh}) as
\begin{align}\label{xi-Dx}
\xi(s)&=4\pi\int_1^\infty x^{1/4}D(x)\cos\left(\frac{\tau\log x}{2}\right)\,dx,
\end{align}
where $\tau$ is a complex number defined by
\begin{align}\label{complex-time}
\tau=t-i\lambda=-i(s-\tfrac{1}{2}),
\end{align}
and we used the identity $\cosh(iy)=\cos y$. By changing the variable from $x$ to $\omega$ by
\begin{align}\label{omega}
\omega=\frac{\log x}{2},~~x\geq 1,
\end{align}
and defining
\begin{align}\label{S-w}
 S(\omega)=8\pi e^{5\omega/2} D(e^{2\omega}),~~\omega\geq 0
\end{align}
we can write (\ref{xi-Dx}) as
\begin{align}\label{xi-S}
\xi(s)&=\int_0^\infty S(\omega)\cos(\omega \tau)\,d\omega,
\end{align}
which is a compact expression for
\begin{align}
\xi(\tfrac{1}{2}+\lambda+it)=\int_0^\infty S(\omega)\left(\cos\omega t\cosh(\omega\lambda)+i\sin\omega t\sinh(\omega\lambda)\right)\,d\omega.
\end{align}
On the critical line $s=\tfrac{1}{2}+it$ (i.e., when $\lambda=0$), the above reduces to a more familiar formula
\begin{align}\label{xi-critical-line}
\Xi(t)=\int_0^\infty S(\omega)\cos(\omega t)\,d\omega.
\end{align}

\subsection{The kernel function $\mathbf{S(\omega)}$ as a power spectral function.}

The kernel $S(\omega)$ defined by (\ref{S-w}) is positive for all $\omega\geq 0$, because $D(x)$ is positive for $x\geq 1$.
Therefore, $S(\omega)$ can qualify as a \emph{spectral density function} of a certain wide-sense stationary (a.k.a. weakly stationary) process,
and we can interpret $\Xi(t)$ as its autocorrelation function (see e.g., \cite{kobayashi} p. 349).  In this context, the Fourier transforms between the spectrum $S(\omega)$ and the function $\Xi(t)$ are what is known as the Wiener-Khinchin theorem (a.k.a. the Wiener-Khinchin-Einstein theorem).  The inverse transform to (\ref{xi-critical-line}), given below by (\ref{inverse-transform}), exists when $\Xi(t)$ is absolutely integrable.

The Fourier transform representation (\ref{xi-critical-line}) has been studied by George P\'{o}lya \cite{polya} and others (see e.g., Titchmarsh \cite{titchmarsh}, Chapter 10).
Dimitrov and Rusev \cite{dimitrov} give a comprehensive review of the past work on ``zeros of entire Fourier transforms,'' including P\'{o}lya's work.

From the above observation that $S(\omega)$ is positive for $\omega\geq 0$, we can readily establish the following proposition:
\begin{theorem}
The modulus $|\Xi(t))|$ is maximum at $t=0$, i.e.,
\begin{align}
|\Xi(t)|\leq \Xi(0)=0.4971\ldots, ~~\mbox{for all}~~t.
\end{align}
Furthermore,
\begin{align}
\int_0^\infty \Xi(t)\,dt=3\pi\left(\frac{\pi^{1/4}}{\Gamma(3/4)}-1\right)=2.8067\ldots.
\end{align}
\begin{proof}
From (\ref{xi-S}), it readily follows that
\begin{align}
|\Xi(t)|\leq \int_0^\infty |S(\omega)|\,d\omega=\int_0^\infty S(\omega)\,d\omega=\Xi(0).
\end{align}
Since $\zeta(\tfrac{1}{2})=-1.46035\ldots$\footnote{See e.g. \url{https://oeis.org/A059750}.}, and
$g(\tfrac{1}{2})=-\tfrac{1}{8}\pi^{-1/4}\Gamma(\tfrac{1}{4})=-0.3404\ldots$, we have $\Xi(0)=\xi(\tfrac{1}{2})=g(\tfrac{1}{2})\zeta(\tfrac{1}{2})
=0.4971\ldots$.

From the Wiener-Khinchin inverse formula, which holds when $\Xi(t)$ is absolutely integrable, we have
\begin{align}\label{inverse-transform}
S(\omega)=\frac{2}{\pi}\int_0^\infty \Xi(t)\cos(\omega t)\,dt.
\end{align}
By setting $\omega=0$, we readily find
\begin{align}
S(0)=\frac{2}{\pi}\int_0^\infty \Xi(t)\,dt.
\end{align}
By setting $\omega=0$ in (\ref{S-w}), we have
\begin{align}
S(0)=8\pi D(1)=8\left(\tfrac{3}{2}\psi'(1)+\psi''(1)\right).
\end{align}
The function $\psi(x)$ satisfies the aforementioned  Jacobi's identity (\ref{Jacobi}).
By differentiating the identity equation, we find
\begin{align}
2\psi'(x)&=-\tfrac{1}{2}x^{-3/2}-x^{-3/2}\psi(1/x)-2x^{-5/2}\psi'(1/x)\label{difpsi1}
%2\psi''(x)&=\tfrac{3}{4}x^{-5/2}+\tfrac{3}{2}x^{-5/2}\psi(1/x)+6x^{-7/2}\psi'(1/x)+2x^{-9/2}\psi''(x).\label{dif2psi1}
\end{align}
By setting $x=1$ in (\ref{difpsi1}) we obtain
\begin{align}\label{psi'-1}
\psi'(1)&=-\tfrac{1}{8}(1+2\psi(1))\\
\end{align}
The value of $\psi(1)$ is known (see e.g., Yi \cite{yi}, Theorem 5.5 in p. 398)
\begin{align}\label{psi-1}
\psi(1)=\tfrac{1}{2}\left(\frac{\pi^{1/4}}{\Gamma(3/4)}-1\right)=\tfrac{1}{2}\left(\frac{1.3313}{1.2254}-1\right)=0.0432\ldots.
\end{align}
Hence,
\begin{align}
\psi'(1)=-\tfrac{1}{8}\frac{\pi^{1/4}}{\Gamma(3/4)}=-0.1358\ldots.
\end{align}
The numerical evaluation of $\psi''(1)$ is straightforward, since its series representation converges rapidly:
\begin{align}
\psi''(1)=\pi^2\sum_{n=1}^\infty n^4e^{-\pi n^2}\approx\pi^2\sum_{n=1}^2 n^4e^{-\pi n^2}= 0.4271\ldots.
\end{align}
Thus, we finally evaluate
\begin{align}
\int_0^\infty\Xi(t)\,dt&=\frac{\pi}{2}S(0)
=4\pi\left(\tfrac{3}{2}\psi'(1)+\psi''(1)\right)= 2.8067\ldots
\end{align}
\end{proof}
\end{theorem}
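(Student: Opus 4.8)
The plan rests on the Fourier-pair structure already established. Both assertions follow from (\ref{xi-S})--(\ref{xi-critical-line}), which exhibits $\Xi(t)=\int_0^\infty S(\omega)\cos(\omega t)\,d\omega$ as the cosine transform of the kernel $S$ of (\ref{S-w}), together with the positivity $S(\omega)\ge 0$ for $\omega\ge 0$ (immediate from $D(x)>0$ for $x\ge 1$). First I would dispatch the maximum bound: since $S\ge 0$ and $|\cos(\omega t)|\le 1$,
\[
|\Xi(t)|=\left|\int_0^\infty S(\omega)\cos(\omega t)\,d\omega\right|\le\int_0^\infty S(\omega)\,|\cos(\omega t)|\,d\omega\le\int_0^\infty S(\omega)\,d\omega=\Xi(0),
\]
the last equality being (\ref{xi-critical-line}) at $t=0$, with equality throughout at $t=0$. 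The numerical value comes straight from the definition (\ref{xi-def}): $\Xi(0)=\xi(\tfrac12)=-\tfrac18\pi^{-1/4}\Gamma(\tfrac14)\zeta(\tfrac12)$, and inserting $\Gamma(\tfrac14)$, $\pi^{-1/4}$ and $\zeta(\tfrac12)=-1.46035\ldots$ gives $0.4971\ldots$.

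For the integral the idea is to read off its value as a single Fourier coefficient. Assuming (as I will need to verify) that $\Xi$ is absolutely integrable on $(0,\infty)$, the Wiener--Khinchin inversion of (\ref{xi-critical-line}) reads $S(\omega)=\tfrac{2}{\pi}\int_0^\infty\Xi(t)\cos(\omega t)\,dt$; putting $\omega=0$ gives $\int_0^\infty\Xi(t)\,dt=\tfrac{\pi}{2}\,S(0)$, so everything reduces to evaluating $S(0)$. From (\ref{S-w}) and (\ref{D-x}), $S(0)=8\pi D(1)=8\pi\sum_{n\ge1}n^2\bigl(n^2\pi-\tfrac32\bigr)e^{-n^2\pi}$; recognising $\sum_{n\ge1}n^2e^{-n^2\pi}=-\psi'(1)/\pi$ and $\sum_{n\ge1}n^4e^{-n^2\pi}=\psi''(1)/\pi^2$ from term-by-term differentiation of (\ref{func-psi}) at $x=1$ turns this into $S(0)=8\bigl(\tfrac32\psi'(1)+\psi''(1)\bigr)$, hence $\int_0^\infty\Xi(t)\,dt=4\pi\bigl(\tfrac32\psi'(1)+\psi''(1)\bigr)$.

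It remains to evaluate the two constants. Differentiating Jacobi's identity (\ref{Jacobi}) once and setting $x=1$ gives $\psi'(1)=-\tfrac18\bigl(1+2\psi(1)\bigr)$, and feeding in the classical theta value $\psi(1)=\tfrac12\bigl(\pi^{1/4}/\Gamma(\tfrac34)-1\bigr)$ yields the closed form $\psi'(1)=-\tfrac18\pi^{1/4}/\Gamma(\tfrac34)$. For $\psi''(1)=\pi^2\sum_{n\ge1}n^4e^{-n^2\pi}$ the series converges so rapidly that a couple of terms give several correct digits; assembling $4\pi\bigl(\tfrac32\psi'(1)+\psi''(1)\bigr)$ then produces the stated value $2.8067\ldots$.

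I expect two steps to demand the most care. The first is the legitimacy of the Wiener--Khinchin inversion, i.e.\ showing $\Xi\in L^1(0,\infty)$; this I would obtain from the decay forced by the $\Gamma$-factor in (\ref{xi-def}) --- $|\Xi(t)|$ is exponentially small, of order $e^{-\pi t/4}$ up to polynomial factors by Stirling --- or one may simply cite the standard bounds in Titchmarsh. The second, and the real sticking point if one insists on a genuinely closed-form answer rather than the numerical one, is $\psi''(1)$: differentiating (\ref{Jacobi}) a second time merely reproduces the relation for $\psi'(1)$, because $x=1$ is the fixed point of the involution $x\mapsto 1/x$ underlying that identity, so a closed form for $\psi''(1)=\pi^2\sum_{n\ge1}n^4e^{-n^2\pi}$ does not follow from the functional equation of $\psi$ alone and would require deeper input such as the Chowla--Selberg formula. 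The remaining items --- term-by-term differentiation of $\psi$, the interchanges of summation and integration, and the arithmetic --- are routine, controlled by absolute convergence.
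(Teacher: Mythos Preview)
Your proposal is correct and follows essentially the same route as the paper's own proof: the trivial bound $|\cos|\le 1$ with $S\ge 0$ for the first claim, and Wiener--Khinchin inversion at $\omega=0$ reducing the integral to $\tfrac{\pi}{2}S(0)=4\pi\bigl(\tfrac32\psi'(1)+\psi''(1)\bigr)$, with $\psi'(1)$ obtained by differentiating Jacobi's identity and $\psi''(1)$ evaluated numerically. Your added remarks on the $L^1$-integrability of $\Xi$ and on why a closed form for $\psi''(1)$ is not available from the functional equation alone are points of rigor the paper's proof leaves implicit or unaddressed; in particular, you are right that neither argument actually derives the displayed closed form $3\pi\bigl(\pi^{1/4}/\Gamma(3/4)-1\bigr)$.
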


The variable $t$ of the complex variable $s=\sigma+it=\tfrac{1}{2}+\lambda+it$ is often called the \emph{height} in the zeta function related literature.  In view of the Wiener-Khinchin theorem (\ref{xi-critical-line}) and (\ref{inverse-transform}), it may be appropriate to interpret $t$ as ``time'' and the variable $\omega$ of $S(\omega)$ as the ``(angular) frequency.''  Then, we may refer to the complex number $\tau$ defined by (\ref{complex-time}) as ``complex-time.''
Use of the complex-time $\tau$ allow the compact representation (\ref{xi-S}) given earlier, viz.
\begin{align}\label{xi-cos}
\xi(s)&=\int_0^\infty S(\omega)\cos(\omega\tau)\,d\omega.
\end{align}
This interpretation of Riemann's result (\ref{xi-cosh}) will shed some new light to the Fourier transform representation of the
$\xi(s)$ function.  We will further discuss this in a later section.

\section{Further results on the Fourier transform representation}

\subsection{Decomposition of $\mathbf{S(\omega)}$}

In the Fourier transform representation (\ref{xi-S})
the kernel function $S(\omega)$ can be expressed as
\begin{align}\label{S-series}
S(\omega)=\sum_{n=1}^\infty S_n(\omega),
\end{align}
with
\begin{align}\label{S-n}
S_n(\omega)=8\pi e^{5\omega/2}D_n(e^{2\omega}),
\end{align}
where
\begin{align}\label{D-func}
D_n(x)= n^2(n^2\pi x-\tfrac{3}{2})e^{-n^2\pi x}.
\end{align}

The Fourier transform can therefore be written as a summation of infinite components, i.e.,
\begin{align}\label{xi-sum-f_n}
\xi(s)=\sum_{n=1}^\infty f_n(s),
\end{align}
with
\begin{align}\label{f_n-def}
f_n(s)&=\int_0^\infty S_n(\omega)\cos(\omega\tau)\,d\omega\nonumber\\
&=8\pi\int_0^\infty e^{5\omega/2}D_n(e^{2\omega})\cos(\omega\tau)\,d\omega.
\end{align}
The switching in the order between the summation over $n$ and the integration over $\omega$, as used in (\ref{f_n-def}) and (\ref{xi-sum-f_n}), can be justified, because the series $\displaystyle{\sum_{n=1}^N S_n(\omega)}$ uniformly converges to $S(\omega)$ as $N\to\infty$ in the entire range $\omega\geq 0$.
Note also that in the range $\omega\geq 0$, $S(\omega)$ is predominantly determined by its first components $S_1(\omega)$, leaving $S_n(\omega), n\geq 2$ negligibly smaller. However, any attempt to replace $S(\omega)$ by $S_1(\omega)$ in an effort to prove the Riemann hypothesis would fail, as argued by Titchmarsh (see \cite{titchmarsh}, Chapter 10, p. 256).

\subsection{The Fourier transform of  $\mathbf{S(\omega)}$ in $\mathbf{-\infty<\omega<\omega}$.}

Now let us consider the Fourier transform of $S(\omega)$ defined over the entire real line $-\infty<\omega<\infty$, instead of the positive line $\omega\geq 0$. Note that the kernel $S(\omega)$ of (\ref{S-w}) extended to the range $-\infty<\omega<\infty$ is symmetric, i.e.,
\begin{align}\label{symmetry-S}
S(-\omega)=S(\omega),~~-\infty<\omega<\infty,
\end{align}
which can be shown using Jacobi's identity (\ref{Jacobi}). See \cite{kobayashi-report4} for a derivation of (\ref{symmetry-S}).

The Fourier transform representation (\ref{xi-S}) can then be rewritten as
\begin{align}
\xi(s)=\tfrac{1}{2}\int_{-\infty}^\infty S(\omega)e^{i\omega\tau}\,d\omega=\tfrac{1}{2}\int_{-\infty}^\infty S(\omega)e^{\omega(s-\frac{1}{2})}\,d\omega.
\end{align}
Since the kernel $S(\omega)$ is a symmetric real function, we can readily derive the reflective property $\xi(1-s)=\xi(s)$ and thus $\xi(s)$ is real on the critical line.

The kernel $S_n(\omega)$ of (\ref{S-n}) can be written as
\begin{align}
S_n(\omega)&=8\pi n^2e^{\frac{5\omega}{2}}D_1(n^2e^{2\omega})
\end{align}
with
\begin{align}\label{D-func}
D_1(x)= (\pi x-\tfrac{3}{2})e^{-\pi x}.
\end{align}
Furthermore, we can write $S_n(\omega)$ in terms of $S_1(\omega)$ as follows:
\begin{align}\label{Sn-S1}
S_n(\omega)=\frac{1}{\sqrt{n}}S_1(\omega+\log n), ~~n=1, 2, 3, \ldots.
\end{align}

By substituting (\ref{S-series}) and (\ref{Sn-S1}) into the above, we obtain
\begin{align}\label{new-result}
\xi(s)&=\tfrac{1}{2}\int_{-\infty}^\infty\sum_{n=1}^\infty S_n(\omega)e^{i\omega\tau}\,d\omega
=\tfrac{1}{2}\int_{-\infty}^\infty\sum_{n=1}^\infty\frac{1}{\sqrt{n}}S_1(\omega+\log n)e^{i\omega\tau}\,d\omega\nonumber\\
&=\tfrac{1}{2}\int_{-\infty}^\infty S_1(\omega')e^{i\omega'\tau}\,d\omega'\sum_{n=1}^\infty \frac{1}{\sqrt{n}}e^{-i\tau\log n},
\end{align}
where we set $\omega+\log n=\omega'$ in the above derivation.
The summed term is nothing but the zeta function $\zeta(s)$, i.e.,
\begin{align}
\sum_{n=1}^\infty \frac{1}{\sqrt{n}}e^{-i\tau\log n}=\sum_{n=1}^\infty\frac{1}{n^{\frac{1}{2}+i\tau}}=\zeta\left(\tfrac{1}{2}+i\tau\right)=\zeta(s),
\end{align}

The result (\ref{new-result}) can be compactly expressed as
\begin{align}\label{xi-xi_1-zeta}
\xi(s)&=\xi_1(s)\zeta(s).
\end{align}

By writing
\begin{align}\label{def-g}
g(s)=\frac{s(s-1)}{2}\pi^{-\frac{s}{2}}\Gamma\left(\frac{s}{2}\right),
\end{align}
we can state the following proposition by referring to (\ref{xi-def}):
%
%  Theorem 4.1
%
\begin{theorem}(The Fourier transform of $\mathbf{S_1(\omega)}$)\\
The function $g(s)$ (\ref{def-g}) that transforms $\zeta(s)$ into $\xi(s)$ by multiplication is the Fourier transform of $S_1(\omega)$ to the domain $\tau$, i.e.,
\begin{align}\label{g-and-S_1}
g(s)=\tfrac{1}{2}\int_{-\infty}^\infty S_1(\omega)e^{i\omega\tau}\,d\omega=\xi_1(s),
\end{align}
where $\tau=t-i\lambda=t-i(\sigma-\tfrac{1}{2})=-i(s-\tfrac{1}{2})$.
\end{theorem}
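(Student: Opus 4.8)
The plan is to read off the identity from two factorizations of $\xi(s)$ that are already available. The definition (\ref{xi-def}) together with (\ref{def-g}) gives $\xi(s)=g(s)\zeta(s)$, while the Fourier-inversion computation leading to (\ref{new-result}) and (\ref{xi-xi_1-zeta}) gives $\xi(s)=\xi_1(s)\zeta(s)$ with
\[
\xi_1(s)=\tfrac{1}{2}\int_{-\infty}^\infty S_1(\omega)e^{i\omega\tau}\,d\omega .
\]
Subtracting, $\bigl(g(s)-\xi_1(s)\bigr)\zeta(s)\equiv 0$; since $\zeta(s)$ is not identically zero, $g(s)=\xi_1(s)$ on any connected open region on which both sides are holomorphic, which is precisely the claim.

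First I would fix the region of convergence. Writing $S_1(\omega)=8\pi e^{5\omega/2}\bigl(\pi e^{2\omega}-\tfrac{3}{2}\bigr)e^{-\pi e^{2\omega}}$ from (\ref{S-n}), one sees that $S_1(\omega)$ decays doubly-exponentially as $\omega\to+\infty$ and behaves like $-12\pi e^{5\omega/2}$ as $\omega\to-\infty$. Since $|e^{i\omega\tau}|=e^{\omega\lambda}$ with $\lambda=\sigma-\tfrac{1}{2}$, the integral defining $\xi_1(s)$ converges absolutely and locally uniformly precisely when $\tfrac{5}{2}+\lambda>0$, i.e.\ for $\Re(s)>-2$; differentiating under the integral sign then shows $\xi_1$ is holomorphic there. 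The function $g(s)$ in (\ref{def-g}) is also holomorphic for $\Re(s)>-2$: the factor $s(s-1)/2$ cancels the pole of $\Gamma(s/2)$ at $s=0$, and the next pole of $\Gamma(s/2)$ lies at $s=-2$.

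Next I would make the passage from (\ref{xi-S}) to (\ref{xi-xi_1-zeta}) rigorous in the half-plane $\Re(s)>1$, where all the pieces are well behaved. Using $S(\omega)=\sum_n S_n(\omega)$, the scaling relation (\ref{Sn-S1}), and the substitution $\omega'=\omega+\log n$, the interchange of $\sum_n$ and $\int_{-\infty}^\infty$ is legitimate because
\[
\sum_{n=1}^\infty \frac{1}{\sqrt n}\int_{-\infty}^\infty |S_1(\omega+\log n)|\,e^{\omega\lambda}\,d\omega
=\Bigl(\sum_{n=1}^\infty n^{-\sigma}\Bigr)\int_{-\infty}^\infty |S_1(\omega')|e^{\omega'\lambda}\,d\omega'<\infty
\]
when $\sigma>1$. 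This establishes $\xi(s)=\xi_1(s)\zeta(s)$ as an identity of holomorphic functions on $\Re(s)>1$; since $\zeta(s)\neq 0$ there, dividing gives $g(s)=\xi(s)/\zeta(s)=\xi_1(s)$ for $\Re(s)>1$, and the identity theorem extends this equality to the whole connected region $\Re(s)>-2$; beyond that, $g(s)$ furnishes the meromorphic continuation of the Fourier integral to all of $\mathbb{C}$.

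The only genuinely delicate point — and the place I expect the main effort — is the term-by-term Fourier inversion over the \emph{entire} line $-\infty<\omega<\infty$: one must confirm that the shifted series $\sum_n n^{-1/2}S_1(\omega+\log n)$ actually reconstructs $S(\omega)$ and may be integrated term by term against $e^{i\omega\tau}$. The absolute-convergence estimate above disposes of this for $\Re(s)>1$; once the identity $g(s)=\xi_1(s)$ is known on that half-plane, extending it to $\Re(s)>-2$ is pure analytic continuation and requires no further work with the integral.
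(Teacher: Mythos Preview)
Your argument is correct and mirrors the derivation the paper itself gives in the paragraphs leading up to the theorem (equations (\ref{new-result})--(\ref{xi-xi_1-zeta})): factor $\xi(s)$ as $g(s)\zeta(s)$ from the definition and as $\xi_1(s)\zeta(s)$ from the Fourier computation, then cancel $\zeta(s)$. You supply the convergence and analytic-continuation details (working first on $\Re(s)>1$ and extending to $\Re(s)>-2$) that the paper's formal manipulation leaves implicit; the paper's in-text proof is only a reference to \cite{kobayashi-report4}.
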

%  Proof
\begin{proof}
See \cite{kobayashi-report4}.
\end{proof}

Let us denote the Fourier transform of $S_n(\omega)$ as $\xi_n(s)$:
\begin{align}\label{xi_n-def}
\xi_n(s)=\tfrac{1}{2}\int_{-\infty}^\infty S_n(\omega)e^{i\omega\tau}\,d\omega=\xi_1(s)n^{-s},
\end{align}
and
\begin{align}
\xi(s)=\sum_{n=1}^\infty \xi_n(s).
\end{align}
Note that the functions $\xi_n(s)$ are individually complex functions even on the critical line, since $S_n(\omega)$ are not symmetric functions, thus $\xi_n(s)$'s do not enjoy the reflective property that their sum $\xi(s)$ does.  If we define
\begin{align}\label{overline-xi-n}
\overline{\xi}_n(s)=\tfrac{1}{2}[\xi_n(s)+\xi_n(1-s)]=\tfrac{1}{2}[g_n(s)n^{-s}+g_n(1-s)n^{s-1}],
\end{align}
this function is reflective and
\begin{align}
\xi(s)=\sum_{n=1}^\infty \overline{\xi}_n(s).
\end{align}

\subsection{Properties of the $\mathbf{g(s)}$ function}
In this section we discuss some properties of $g(s)$ defined by (\ref{def-g}), and its relations to
the Riemann-Siegel function and Hardy's Z-function.

We set $s=\tfrac{1}{2}+it$ in $g(s)$ and define real functions $a(t)$ and $b(t)$:
\begin{align}
a(t)&=\Re\left\{\log\Gamma\left(\tfrac{1}{4}+\tfrac{it}{2}\right)\right\},\nonumber\\
b(t)&=\Im\left\{\log\Gamma\left(\tfrac{1}{4}+\tfrac{it}{2}\right)\right\}.
\end{align}
Then, we can write
\begin{align}\label{g(s)-critical-line}
g\left(\tfrac{1}{2}+it\right)&=-\tfrac{1}{2}\left(t^2+\tfrac{1}{4}\right)\pi^{-1/4}e^{-i\frac{t}{2}\log\pi}e^{a(t)+ib(t)}.
\end{align}
By defining two real functions $r(t)$ and $\vartheta(t)$
\begin{align}
r(t)&=-\tfrac{1}{2}\left(t^2+\tfrac{1}{4}\right)\pi^{-1/4} e^{a(t)},\nonumber\\
\vartheta(t)&=b(t)-\tfrac{t}{2}\log\pi=\Im\left\{\log\Gamma\left(\tfrac{1}{4}+\tfrac{it}{2}\right)\right\}-\tfrac{t}{2}\log\pi, \label{riemman-siegel}
\end{align}
we can rewrite (\ref{g(s)-critical-line}) as
\begin{align}\label{g-critical-line}
g\left(\tfrac{1}{2}+it\right)&=r(t)e^{i\vartheta(t)}.
\end{align}

The function $\vartheta(t)$ of (\ref{riemman-siegel}) is called the Riemann-Siegel theta function, and the function $Z(t)$ defined by
\begin{align}\label{Z-def}
Z(t)=\zeta\left(\tfrac{1}{2}+it\right)e^{i\vartheta(t)},
\end{align}
is often referred to as Hardy's $Z$-function \cite{ivic}, which is real for real $t$ and has the same zeros as $\zeta(s)$ at $s=\tfrac{1}{2}+it$, with $t$ real.  Thus, locating the Riemann zeros on the critical line reduces to locating zeros on the real line of $Z(t)$.  Furthermore,
\[ |Z(t)|=|\zeta(\tfrac{1}{2}+it)|. \]

Consider the following Stirling approximation formula for $\Gamma(s)$:
\begin{align}
\log \Gamma(s)\approx\tfrac{1}{2}\log\frac{2\pi}{s}+s(\log s -1).
\end{align}
Then
\begin{align}
\log\Gamma(s/2)\approx \left(1-\tfrac{s}{2}\right)\log 2+\tfrac{1}{2}\log\pi+\left(\tfrac{s-1}{2}\right)\log s-\tfrac{s}{2}.
\end{align}
By evaluating the above at $s=\tfrac{1}{2}+it$, we have
\begin{align}
\log\Gamma\left(\tfrac{1}{4}+\tfrac{it}{2}\right)&=a(t)+ib(t)\nonumber\\
&\approx \tfrac{3}{4}\log 2+\tfrac{1}{2}\log\pi-\left(\tfrac{1}{4}+\tfrac{t\theta(t)}{2}\right)-\tfrac{1}{8}\log\left(t^2+\tfrac{1}{4}\right)
 +i\left[\tfrac{t}{4}\log\left(t^2+\tfrac{1}{4}\right)-\tfrac{t}{2}-\tfrac{t}{2}\log 2-\tfrac{\theta(t)}{4}\right],
\end{align}
where
\begin{align}
\theta(t)=tan^{-1}2t.
\end{align}
Thus, we obtain
\begin{align}
r(t)&\approx-2^{-\frac{1}{4}}\pi^{\frac{1}{4}}\left(t^2+\tfrac{1}{4}\right)^{\frac{7}{8}}e^{-\frac{1}{4}-\frac{\theta(t) t}{2}}\nonumber\\
\vartheta(t)&\approx\tfrac{t}{2}\log\tfrac{t}{2\pi e}-\tfrac{\theta(t)}{4}+\tfrac{t}{4}\log\left(1+\tfrac{1}{4t^2}\right).
\end{align}

If we set
\begin{align}\label{A-varphi-def}
A(t)=-r(t),~~\mbox{and}~~\varphi(t)=\vartheta(t)+\pi,
\end{align}
then,
\begin{align}
g\left(\tfrac{1}{2}+it\right)=A(t)e^{i\varphi(t)}.
\end{align}
We denote the real and imaginary parts of $g\left(\tfrac{1}{2}+it\right)$ by $G(t)$ and $\hat{G}(t)$, respectively, viz:
\begin{align}
g\left(\tfrac{1}{2}+it\right)=G(t)+i\hat{G}(t).
\end{align}
Then it is apparent that
\begin{align}
G(t)=A(t)\cos\varphi(t),~~\mbox{and}~~\hat{G}(t)=A(t)\sin\varphi(t).
\end{align}

For sufficiently large $t\gg 1$, $\theta(t)\approx \frac{\pi}{2}$. Thus, $A(t)$ and $\varphi(t)$ can be approximated by
\begin{align}
A(t)&\approx (2e\pi)^{-\frac{1}{4}}e^{-\frac{\pi t}{4}}t^{\frac{7}{4}},~~\mbox{for}~~t\gg 1,\nonumber\\
\varphi(t)&\approx \frac{t}{2}\log\frac{t}{2e\pi}+\frac{7\pi}{8},~~\mbox{for}~~t\gg 1.
\end{align}
The function $A(t)$ is strictly positive for all $t$, hence $G(t)$ becomes zero only when $\varphi(t)=n\pi+\frac{\pi}{2}$ for some integer $n$.  Similarly,
$\hat{G}(t)$ crosses zero only when $\varphi(t)=n\pi$ for integer $n$.
Thus, the number of zeros $N(T)$ of $G(t)$ in $(0, T)$ is given by
\begin{align}\label{N-t}
N(T)=\frac{\varphi(T)}{\pi}\approx \frac{T}{2\pi}\log\frac{T}{2\pi e}+\frac{7}{8},~~T>T(\epsilon).
\end{align}
The same result should hold for the number of zeros $N(T)$ of $\hat{G}(T)$ in $(0,T)$.
The above $N(T)$ agrees to the asymptotic ``Riemann-von Mangoldt formula'' for the number of zeros of $\zeta\left(\tfrac{1}{2}+it\right)$ (and hence the number of zeros of $\xi\left(\tfrac{1}{2}+it\right)$, as well), which Riemann conjectured in his 1859 lecture and proved by von Mangoldt in 1905 (see e.g.,\cite{edwards,matsumoto}).

Gram \cite{gram} observed in 1909 that zeros of $Z(t)$ and zeros of $\sin\vartheta(t)$ alternate on the $t$ axis, with some few exception (see Edwards \cite{edwards} p. 125).  His observation is consistent with our analysis given above that the number of zeros $\hat{G}(t)=A(t)\sin\varphi(t)=-A(t)\sin\vartheta(t)$ in
the interval $[0, t]$ is asymptotically equivalent to that of $\zeta(\tfrac{1}{2}+it)$ (and hence that of $\Xi(t)$ as well).
If we define the complex function
\begin{align}
z(s)=\frac{\xi(s)}{r(t)},
\end{align}
then $z(s)$ is reflective.  Furthermore $z\left(\tfrac{1}{2}+it\right)=Z(t)$, because (\ref{g-critical-line}) and (\ref{Z-def}) imply
\begin{align}
Z(t)=\frac{\Xi(t)}{r(t)}.
\end{align}

Let $G_n(t)$ denote the value  on the critical line of $\overline{\xi}_n(s)$ defined in (\ref{overline-xi-n}), i.e.,
\begin{align}\label{G_n-t}
G_n(t)&=\overline{\xi}_n\left(\tfrac{1}{2}+it\right)=\left.\tfrac{1}{2}[g(s)n^{-s}+g(1-s)n^{s-1}]\right|_{s=\frac{1}{2}+it}
=\tfrac{1}{2}\left[(G(t)+i\hat{G}(t))n^{-\frac{1}{2}-it}+(G(t)-i\hat{G}(t))n^{-\frac{1}{2}+it}\right]\nonumber\\
&=G(t)n^{-\frac{1}{2}}\cos(t\log n)+\hat{G}(t)n^{-\frac{1}{2}}\sin(t\log n)=A(t)n^{-\frac{1}{2}}\cos(\varphi(t)-t\log n).
\end{align}
Thus, we find
\begin{align}
\Xi(t)=\sum_{n=1}^\infty G_n(t)=A(t)\sum_{n=1}^\infty n^{-\frac{1}{2}}\cos(\varphi(t)-t\log n),
\end{align}
where $A(t)=-r(t)$ and $\varphi(t)=\vartheta(t)+\pi$ are defined in (\ref{A-varphi-def}), and
\begin{align}
g\left(\tfrac{1}{2}+it\right)=G(t)+i\hat{G}(t)=A(t)e^{i\varphi(t)}=-r(t)e^{i\vartheta(t)}.
\end{align}

\appendix
\numberwithin{equation}{section}
\renewcommand\thefigure{\thesection.\arabic{figure}}
\counterwithin{figure}{section}

\section{Derivation of (\ref{xi-def}) and (\ref{edwards-p16})}\label{append-A}
Although the essence of both equations is found in Riemann's original paper, we follow Edwards \cite{edwards} and
Matsumoto \cite{matsumoto}. We begin with the integral representation of the gamma function
\begin{align}
\Gamma(s)=\int_0^\infty u^{s-1}e^{-u}\,du.
\end{align}
By setting $u=\pi n^2x$, we have
\begin{align}
\Gamma(s)=\pi^sn^{2s}\int_0^\infty x^{s-1}e^{-\pi n^2 x}\,dx.
\end{align}
Then,
\begin{align}
\Gamma(s/2)=\pi^{s/2}n^{s}\int_0^\infty x^{\frac{s}{2}-1}e^{-\pi n^2 x}\,dx,
\end{align}
from which we obtain
\begin{align}
\pi^{-s/2}\Gamma(s/2)n^{-s}=\int_0^\infty x^{\frac{s}{2}-1}e^{-\pi n^2 x}\,dx.
\end{align}
By summing up over $n$ from 1 to infinity, we obtain
\begin{align}\label{nu}
\pi^{-s/2}\Gamma(s/2)\zeta(s)=\int_0^\infty x^{\frac{s}{2}-1}\psi(x)\,dx,
\end{align}
where $\psi(x)$ is given defined in (\ref{func-psi}).

Let us write (\ref{nu}) as $\nu(s)$, and the split the integration interval of the RHS into the two subintervals, $[0,1)$ and $[1,\infty)$, viz:
\begin{align}
\nu(s)&=\pi^{-s/2}\Gamma(s/2)\zeta(s)=\int_0^\infty x^{\tfrac{s}{2}-1}\psi(x)\,dx\nonumber\\
&=\int_0^1 x^{\tfrac{s}{2}-1}\psi(x)\,dx +\int_1^\infty x^{\frac{s}{2}-1}\psi(x)\,dx.
\end{align}
By substituting Jacobi's identity for $\psi(x)$ given by (\ref{Jacobi}) into the first integrand, we find
\begin{align}\label{nu-function}
\nu(s)&=\int_0^1 x^{\tfrac{s}{2}-1}\left(x^{-1/2}\psi(x^{-1})+\tfrac{1}{2}x^{-1/2}-\tfrac{1}{2}\right)\,dx +\int_1^\infty x^{\frac{s}{2}-1}\psi(x)\,dx\nonumber\\
&=-\frac{1}{1-s}-\frac{1}{s}+\int_1^\infty\left(x^{\frac{s}{2}-1}+x^{\frac{1-s}{2}-1}\right)\psi(x)\,dx.
\end{align}
It is apparent that $\nu(s)$ satisfies the reflective property, i.e.,
\[ \nu(1-s)=\nu(s).\]
The function $\nu(s)$ is not an entire function since it has $s=0$ and $s=1$ as poles.  By multiplying $\nu(s)$ by $\displaystyle{-\frac{s(1-s)}{2}}$, we define $\xi(s)$, viz.
\begin{align}
\xi(s)=-\tfrac{1}{2}s(1-s)\nu(s)=\tfrac{1}{2}s(s-1)\pi^{-s/2}\Gamma(s/2)n^{-s}\zeta(s),
\end{align}
which is (\ref{xi-def}).

The function $\xi(s)$ should satisfy the reflective property (\ref{symmetric_property}) since both $\nu(s)$ and $\displaystyle{-\frac{s(1-s)}{2}}$ are reflective.
From (\ref{nu-function}), we obtain
\begin{align}
\xi(s)=\tfrac{1}{2}-\tfrac{1}{2}s(1-s)\int_1^\infty\left(x^{\frac{s}{2}}+x^{\frac{1-s}{2}}\right)\psi(x)\,\frac{dx}{x},
\end{align}
which is (\ref{edwards-p16}).  From the last expression, it is apparent that $\xi(0)=\xi(1)=\tfrac{1}{2}$.
\end{document}